\documentclass[10pt,a4paper,notitlepage,headinclude,twoside]{article}
\usepackage[utf8]{inputenc}
\usepackage[british]{babel}
\usepackage{xargs}                      % Use more than one optional   parameter in a new commands
\usepackage{cite}
\usepackage[pdftex,dvipsnames]{xcolor}  % Coloured text etc.

%%%%%%%%%%%%%%%%%%%%%%% Geometrie Package für Seitenränder %%%%%%%%%%%%%%%%%%%%%%%%%%%%%

\usepackage{blindtext}   %%% seee Overleaf Documentation for Details
\usepackage{geometry}
 \geometry{
 a4paper,
 left=35mm,
 right=35mm,
 top=40mm,
 bottom=40mm,
 }

%%%%%%%%%%%%%%%%%%%%%%% Ende Geometrie Package %%%%%%%%%%%%%%%%%%%%%%%%%%%%%%%%%%%%%%%%%%

\usepackage{amsmath, amsthm, amsfonts, amssymb}
\usepackage[alphabetic]{amsrefs}   %Ich benutze stattdessen BibTeX
\usepackage{mdframed}
\usepackage{bm}   %font change for math symbols
\usepackage{slashed}     %streicht durch, für Dirac Operator
\usepackage{xcolor}    %wenn was nicht funzen sollte auf color zurück gehen
\usepackage[pdftex]{graphicx}
\usepackage[all]{xy}  
\usepackage{ifthen}
\usepackage{tikz}
\usepackage{sseq}
\usepackage{index}
\usepackage{epigraph} %% for Quations
\usepackage[colorinlistoftodos,prependcaption,textsize=tiny]{todonotes} 
%%'disable' hinzufügen/entfernen, um Notizen (un-)sichtbar zu machen
\usepackage[english, refpage]{nomencl}                    %%% für Symbolverzeichnis
%\usepackage{ifthen}                                      %%% needed for listofsymbols
%\usepackage{clac}                                        %%% needed for listofsymbols
%\usepackage[]{listofsymbols}                             %%% für Symbolverzeichnis
%% Usepackage List of Symbols wird entfernt und durch entwas anderes ersetzt, was besser funktioniert. Dieses Scheißteil hat einige mystische Bugs in sich :(
\usetikzlibrary{matrix,arrows}

%\graphicspath{{Bilder/}}   %%% Globale Referenz für Bilder [im Ordner "Bilder"] damit, man nicht den gesamten Dateipfad angeben muss!

%%%%%%%%%%%%%%%%%%%%%%% Für andere Headerbeschreibung  %%%%%%%%%%%%%%%%%%%%%%%%%%%%%%%%%

\usepackage{nameref}
\makeatletter
\newcommand*{\currentname}{\@currentlabelname}
\makeatother
\usepackage{fancyhdr}  %% für extra Beschriftung des Headers
\pagestyle{fancy}
\fancyhf{}
\fancyhead[CE]{\scshape{Thorsten Hertl}}
\fancyhead[CO]{\scshape{Line Bundle Twists for Unitary Bordism are Ghosts}}
\cfoot{\thepage}
 %%Stärke des Trennstriches

%%%%%%%%%%%%%%%%%%%%%%%%   Headebeschreibung Ende %%%%%%%%%%%%%%%%%%%%%%%%%%%%%%%%

\usepackage{wrapfig}   %%% für Grafikumfließenden Text

\usepackage{hyperref}  %%% für interne Links
\hypersetup{colorlinks=true, linkcolor=blue, citecolor=blue, urlcolor=blue,}

%%%%%%%%%%%%%%%%%%%%%%%%%%  Theorem/Defintion/etc Umgebungen %%%%%%%%%%%%%%%%%%%%%%%%%%%%%%%%%%%%%%
\theoremstyle{plain} 
\newtheorem{proposition}{Proposition}[section]

\newtheorem{lemma}[proposition]{Lemma}
\newtheorem{cor}[proposition]{Corollary}

\newtheorem{thmx}{Theorem} %%% Theorem Command for Main Theorems in the Introductions
 % "letter-numbered" theorems
\newtheorem{lemx}[thmx]{Lemma}

\theoremstyle{definition}

\theoremstyle{remark}

%%%%%%%%%%%%%%%%%%%%%%%% to Do Notes Befehle %%%%%%%%%%%%%%%%%%%%%%%%%%%%%%%%%%%%%%%%%%%%%%%%%%%
\newcommandx{\unsure}[2][1=]{\todo[linecolor=red,backgroundcolor=red!25,bordercolor=red,#1]{#2}}
\newcommandx{\change}[2][1=]{\todo[linecolor=blue,backgroundcolor=blue!25,bordercolor=blue,#1]{#2}}
\newcommandx{\info}[2][1=]{\todo[linecolor=OliveGreen,backgroundcolor=OliveGreen!25,bordercolor=OliveGreen,#1]{#2}}
\newcommandx{\improvement}[2][1=]{\todo[linecolor=Plum,backgroundcolor=Plum!25,bordercolor=Plum,#1]{#2}}
\newcommandx{\thiswillnotshow}[2][1=]{\todo[disable,#1]{#2}}

%%%%%%%%%%%%%%%%%%%%%%% Mathebefehle
   %%% Euklidische Metrik
\newcommand{\pr}{\mathord{\mathrm{pr}}}                 %%% Projektion
        %%% Restriktion 

\newcommand{\N}{\mathord{\mathbb{N}}}                   %%% Natürliche Zahlen 
\newcommand{\Z}{\mathord{\mathbb{Z}}}                   %%% Ganze Zahlen
\newcommand{\Q}{\mathord{\mathbb{Q}}}                   %%% rationale Zahlen
\newcommand{\R}{\mathord{\mathbb{R}}}                   %%% reelle Zahlen
                   %%% komplexe Zahlen
                   %%% imaginäre Einheit

\newcommand{\CP}{\mathord{\mathbb{C}P}}
\newcommand{\id}{\mathord{\mathrm{id}}}
\newcommand{\placeholder}{\text{-}}
\newcommand{\const}{\mathrm{const}}

%%%%%%%%%%%%%%%%%%% For the Spectra Part %%%%%%%%%%%%%%%%%%%%%%%%%%%%%%%%%%%%%%%%%%%%%%%%%%%%%%%%%%%%%

\newcommand{\SpecOrth}{\mathbf{Sp}^\mathrm{orth}}   %Category of orth spectra

              %Sphere Spectrum
\newcommand{\CFOri}{\mathrm{CF}}

%%%%%%%%%%%%%%%%%%%%%%%%%%%%%%%% Text-Befehle %%%%%%%%%%%%%%%%%%%%%%%%%%%%%%%%%%%%%%%%%%%%%%%%%%

%%%%%%%%%%%%%%%%%%%%%%%%% Text-Kommandos %%%%%%%%%%%%%%%%%%%%%%%%%%%%%%%%%%%%%%%%%%%%%%%%%

\newcommand{\deff}{\mathrel{\vcenter{\offinterlineskip\hbox{.}\vskip-.80ex\hbox{.}}}\joinrel \hskip 1pt =} %für das := Zeichen
  % zum hervorheben von Defintionen

\usepackage[T1]{fontenc}

\makenomenclature
\makeindex

\title{Line Bundle Twists for Unitary Bordism are Ghosts}
\author{Thorsten Hertl}
%\subjclass[2010]{14L05, 19L50, 55N22, 55P42, 55R35}
\date{}

\begin{document}
\maketitle
\begin{abstract}
    We prove that the canonical twist $\zeta \colon K(\Z,3) \rightarrow BGL_1(MSpin^c)$ does not extend to a twist for unitary bordism by showing that every continuous map $f \colon K(\Z,3) \rightarrow BGL_1(MU)$ loops to a null homotopic map.
\end{abstract}
\section{Introduction}

Twisted cohomology, originally invented by Steenrod \cite{steenrod1943homology}, allows to classify geometric objects that cannot be classified by untwisted cohomology without further orientation conditions. 

While Steenrod defined twisted (co-)homology algebraically in terms of chain complexes with local coefficients, a modern approach to twisted (co)homology  in terms of twisted spectra is provided by the work of \cite{ando2014categorical}, \cite{ando2010twists}, and \cite{Ando_Units_2014}.
Their construction goes roughly as follows: An $A_\infty$-ring spectrum $R$ has a space of units $GL_1(R)$, which deloops to a classifying space $BGL_1(R)$.
A \emph{twist} is a map $\xi \colon X \rightarrow BGL_1(R)$ from which we can construct a Thom spectrum $X^\xi$, whose homotopy groups are the $\xi$-twisted homology groups of $X$.

Our motivation for the work presented here arose from the results of \cite{hebestreit2020twisted}, see also \cite{hebestreit2020homotopical}. 
The authors construct point set models for twisted $Spin^c$ bordism and twisted $K$-theory over $K(\Z,3)$ as well as a model for a twisted Atiyah-Bott-Shapiro orientation $\alpha_{ABS}\colon MSpin^c_{K(\Z,3)} \rightarrow K_{K(\Z,3)}$.
%The reference maps are induced by the classifying map of the third integral Stiefel-Whitney class $W_3 \colon BSO \rightarrow K(\Z,3)$, which is the obstruction for a $Spin^c$-structure.
In  \cite{hebestreit2020twisted}*{App. C} it is shown that the two approaches agree, so that the point set model also arises from a twist map $\zeta \colon K(\Z,3) \rightarrow BGL_1(MSpin^c)$.

One of its application concerns the bordism-determines-homology question.
The classical results of \cite{ConnerFloyd_CobVsK_1966} and \cite{hopkins1992spin} state that $K$-theory is completely described by the Conner-Floyd orientation $\mathrm{CF} \colon MU \rightarrow K$ and the ABS orientation $\alpha_{ABS} \colon MSpin^c \rightarrow K$ in the sense that
\begin{equation*}
    MU_\ast(X) \otimes_{MU_\ast(\mathrm{pt})} K_\ast(\mathrm{pt}) \cong K_\ast(X) \cong MSpin^c_\ast(X) \otimes_{MSpin^c_\ast(\mathrm{pt})} K_\ast(\mathrm{pt})
\end{equation*}
for all spectra $X$.
In \cite{baum20XXtwisted} the latter isomorphism is proved by geometric means in the twisted set-up using the $K(\Z,3)$-twisted ABS orientation.

It is natural to ask whether one can also extend the first isomorphism to the $K(\Z,3)$-twisted setup such that it reduces to the isomorphism of \cite{baum20XXtwisted} under the canonical map $MU \rightarrow MSpin^c$.
The first step would be to construct a twisted unitary bordism spectrum over $K(\Z,3)$ and a comparison map $MU_{K(\Z,3)} \rightarrow MSpin^c_{K(\Z,3)}$ that extends the twisted Atiyah-Bott-Shapiro orientation 'to the left'.
The first result of this article shows that this is impossible.

\begin{thmx}\label{Theorem_A}%% This should be called main theorem!
  The twist map $\zeta \colon K(\Z,3) \rightarrow BGL_1(MSpin^c)$ does not factor through unitary bordism.
  More precisely, there are no continuous maps $S$ and $T$ that make the following diagram homotopy commutative
  \begin{equation*}
      \xymatrix{K(\Z,3) \ar[r]^-\zeta \ar[rd]_T & BGL_1(MSpin^c) \\
      & BGL_1(MU). \ar[u]_{S} }
  \end{equation*}
\end{thmx}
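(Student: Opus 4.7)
My plan is to argue by contradiction via the loop functor, following the strategy outlined in the abstract. Suppose maps $T \colon K(\Z,3) \to BGL_1(MU)$ and $S \colon BGL_1(MU) \to BGL_1(MSpin^c)$ realise $\zeta \simeq S \circ T$; applying $\Omega$ yields $\Omega \zeta \simeq \Omega S \circ \Omega T$. I aim to establish two assertions: (i) for every continuous $f \colon K(\Z,3) \to BGL_1(MU)$, the loop $\Omega f \colon \CP^\infty \to GL_1(MU)$ is null-homotopic; and (ii) the loop $\Omega \zeta \colon \CP^\infty \to GL_1(MSpin^c)$ is not null-homotopic. Applying (i) to the hypothetical $T$ then forces $\Omega \zeta \simeq \Omega S \circ \ast \simeq \ast$, contradicting (ii).

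For (ii), I would post-compose $\zeta$ with the Atiyah--Bott--Shapiro orientation $\alpha_{ABS} \colon MSpin^c \to K$ to obtain the canonical $K$-theory twist $\zeta_K \colon K(\Z,3) \to BGL_1(K)$. Its loop $\Omega \zeta_K \colon \CP^\infty \to GL_1(K)$ is the classical map sending the tautological line bundle $L$ to its class $[L] \in K^0(\CP^\infty)^\times$ as a $K$-theoretic unit. This map is well-known to be non-trivial, detected on $\pi_2(GL_1 K) = \pi_2 K = \Z$ by the Bott generator. Since the induced map $\pi_2 GL_1(MSpin^c) \to \pi_2 GL_1(K)$ is non-zero (the ABS orientation is surjective on $\pi_2$), $\Omega \zeta$ itself cannot be null.

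Assertion (i) is the crux. For any continuous $f$, the loop $\Omega f$ is automatically an $H$-map (loop concatenation is preserved by $\Omega$), and via the complex orientation $MU^0(\CP^\infty) \cong MU^\ast[[x]]$ it corresponds to a multiplicative formal unit $u(x)$ satisfying $u(0) = 1$ and $u(x +_F y) = u(x)\, u(y)$ for the universal formal group law $F$. The key point is that the additional structure of $u$ arising as the loop of a map from the full classifying space $K(\Z,3) = B\CP^\infty$ (and not merely as an $H$-map on $\CP^\infty$) supplies higher-coherence data rigid enough to force $u \equiv 1$. I see two natural routes: a Postnikov/obstruction analysis of $[K(\Z,3), BGL_1(MU)]$ combining the $k$-invariants of $BGL_1(MU)$ with the integral cohomology $H^\ast(K(\Z,3); \Z)$ (whose torsion is generated by Steenrod operations on $\iota_3$), or a Rezk--Ando-type logarithm translating multiplicative rigidity into an additive one for the universal formal group, where the rational denominators appearing in $\log_F$ preclude non-trivial integral primitives.

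The main obstacle is carrying out (i) in detail. Proving the rigidity of the multiplicative units requires a delicate interplay between the $2$-local Postnikov data of $BGL_1(MU)$ (the relevant torsion prime for $K(\Z,3)$) and the formal group structure on $\pi_\ast MU$. Showing that this interplay kills every candidate $u$, rather than merely the leading term, is where the real technical work resides; this is where one must genuinely use that $f$ is defined on all of $K(\Z,3)$ and not just that its loop is an $H$-map.
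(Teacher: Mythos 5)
Your overall architecture matches the paper's: assume $\zeta \simeq S \circ T$, loop, and derive a contradiction from (i) the fact that every map $K(\Z,3) \to BGL_1(MU)$ loops to a null map (this is Theorem~\ref{Theorem_B}) together with (ii) the non-triviality of $\Omega\zeta$. For (ii) your route is different from the paper's but perfectly reasonable: you detect $\Omega\zeta$ by pushing forward along the Atiyah--Bott--Shapiro orientation and reading off $\pi_2$, whereas the paper pushes forward along $M\xi \colon MSpin^c \to MSO \wedge \Sigma^\infty_+ BU(1)$ and detects on rational homology via the Thom isomorphism and the squaring map $B\varphi$. (One small remark on your version: the relevant implication is simply that a non-zero composite $GL_1(\alpha_{ABS}) \circ \Omega\zeta$ forces $\Omega\zeta \neq 0$; the aside about $\alpha_{ABS}$ being surjective on $\pi_2$ is neither needed nor to the point.)

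The genuine gap is assertion (i), which is the whole content of the paper. You do not prove it, and your proposed strategies for it head in the wrong direction. You assert that ``one must genuinely use that $f$ is defined on all of $K(\Z,3)$ and not just that its loop is an $H$-map,'' and you aim your tools (the $2$-local Postnikov data of $BGL_1(MU)$ against the torsion of $H^\ast(K(\Z,3);\Z)$, or a Rezk--Ando logarithm) at that higher-coherence structure. The paper proves the opposite: the $H$-map structure alone suffices. Concretely, the loop $\Omega f$ is only ever used as an $H$-map; after lifting through group completion and applying the suspension--$\Omega^\infty$ adjunction it becomes a \emph{naive} ring map $\Sigma^\infty_+\CP^\infty \to MU$, and Lemma~\ref{Lemma_C} (equivalently Theorem~\ref{Theorem_D}) shows that already this weakest multiplicativity hypothesis forces your multiplicative unit $u(x)$ with $u(x +_F y) = u(x)u(y)$, $u(0)=1$ to be constantly $1$. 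The actual mechanism is entirely different from what you propose: one collapses the Adams--Novikov spectral sequence for $[\Sigma^\infty_+\CP^\infty, MU]$ to reduce to a Hopf-algebroid computation, uses the Pontrjagin ring structure of $MU_\ast(\CP^\infty)$ together with the polynomial structure of $MU_\ast(MU)$ to pin down the image of $\beta_1$ up to a scalar, and then kills that scalar by comparing $K_\ast(K)$-comodule structures via the Conner--Floyd map. Without that algebraic argument you have identified the right statement to prove but not a route to proving it, and the misconception that $H$-map rigidity is insufficient would steer you away from the argument that actually works.
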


Theorem \autoref{Theorem_A} is, in fact, a consequence of the following purely homotopy-theoretical result.
\begin{thmx}\label{Theorem_B}
  Each continuous map $f \colon K(\Z,3) \rightarrow BGL_1(MU)$ loops to a null homotopic map $\Omega f \colon K(\Z,2) \rightarrow GL_1(MU)$.
\end{thmx}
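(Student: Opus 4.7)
The plan is to translate the problem into the language of formal group laws via the complex orientation of $MU$, reduce the $H$-map condition to a multiplicativity equation on a single power series, and then show by a divisibility argument that the only integral solution is trivial.

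Via the complex orientation of $MU$, one identifies $MU^0(K(\Z,2)) = MU^0(\CP^\infty)$ with $MU_{*}[\![x]\!]$, and based maps $K(\Z,2) \to \Omega^\infty MU$ landing in the identity component of $GL_1(MU) \subset \Omega^\infty MU$ correspond bijectively to power series $u(x) = 1 + \sum_{n \geq 1} a_n x^n$ with $a_n \in MU_{2n}$. Since $\Omega f$ is the loop of a pointed map, it is an $H$-map with respect to the tensor-product-of-line-bundles structure on $K(\Z,2)$ and the multiplicative structure on $GL_1(MU)$. Pulling back the complex orientation along the multiplication $\mu \colon \CP^\infty \times \CP^\infty \to \CP^\infty$ returns the formal group law $F$ of $MU$, so the $H$-map condition becomes the multiplicativity relation
\begin{equation*}
    u\bigl(F(x_1,x_2)\bigr) = u(x_1)\cdot u(x_2).
\end{equation*}
Equivalently, $u$ corresponds to a formal group law homomorphism from $F$ to the multiplicative formal group $\hat{\mathbb{G}}_m$, defined over the Lazard ring $MU_{*}$.

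The main claim is that the only such $u$ is $u \equiv 1$. Over $\Q$, the law $F$ admits a logarithm $\log_F(x) = \sum_{k \geq 0} \tfrac{[\CP^k]}{k+1}\, x^{k+1}$, and every solution has the form $u = \exp(a_1 \log_F)$. Writing $a_1 = m \cdot [\CP^1]$ with $m \in \Z$ and expanding, for each odd prime $p$ one checks that the coefficient of the polynomial-generator monomial $x_1 x_{p-1}$ in $a_p$ equals $m/p$; this uses the fact that $[\CP^{p-1}]$ reduces to $\pm x_{p-1}$ modulo decomposables whenever $p$ is prime (Milnor's choice of generators). Since $a_p$ must lie in the torsion-free module $MU_{2p}$, this forces $p \mid m$ for every odd prime $p$, and hence $m = 0$. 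Inductively feeding $a_1 = 0 = \dots = a_{n-1}$ into the recursion $n\, a_n = a_1 a_{n-1} - \sum_{k < n} a_k \cdot [\text{coeff.\ of } x_1 x_2^{n-1} \text{ in } F^k]$ (obtained by comparing coefficients of $x_1 x_2^{n-1}$ in the multiplicativity equation) yields $a_n = 0$ for every $n \geq 1$. Thus $u \equiv 1$, so $\Omega f$ is constant at the identity of $GL_1(MU)$ and hence null homotopic.

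The main obstacle is the algebraic claim above. Conceptually it says $\mathrm{Hom}_{MU_{*}}(F_{MU}, \hat{\mathbb{G}}_m) = 0$, i.e.\ the universal formal group law admits no non-trivial integral homomorphism to the multiplicative formal group; the divisibility argument via Milnor's generators is the cleanest route I see, though a more conceptual proof from the formal group law literature may also be available.
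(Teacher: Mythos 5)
Your argument is correct, and its core algebraic step is genuinely different from the paper's. The translation of the statement into the multiplicativity equation $u(F(x_1,x_2)) = u(x_1)\,u(x_2)$ with $u(0)=1$ for a power series $u \in MU_*[[x]]$ is essentially the same as the paper's: the paper carries it out in two pieces (Lemma \ref{StableReduction - Lemma} passes to the stable category via the suspension adjunction and the Adams--Novikov spectral sequence, and the proof of Theorem \ref{Theorem_D} makes the formal-group-law dictionary explicit), and, as in the paper, you should note that identifying the looped $H$-structure on $\Omega BGL_1(MU)$ with the given topological-monoid structure on $GL_1(MU)$ requires a short group-completion argument. The real divergence is in the algebra. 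The paper works on the homology side with the Pontrjagin ring $MU_*(\CP^\infty)$, writes $f(\beta_1) = \lambda_2\cdot 1 + \lambda_0 b_1 \in MU_2(MU)$, kills $\lambda_0$ by a factorial divisibility argument ($n!\mid\lambda_0^n$), and then kills $\lambda_2$ by pushing forward along the Conner--Floyd map to $K$-theory and invoking the naturality of the $K_*(K)$-coaction, i.e.\ a genuinely topological input about maps of spectra. Your argument never leaves the formal group law side: over $\Q$ every homomorphism $F_{MU}\to\hat{\mathbb G}_m$ is $\exp(a_1\log_F)$ with $\log_F(x)=\sum_{k\ge 0}\tfrac{[\CP^k]}{k+1}x^{k+1}$; the $s$-number fact $s_{p-1}([\CP^{p-1}])=p$ (with $[\CP^1]=\pm x_1$) pins the $x_1 x_{p-1}$-coefficient of $a_p$ at exactly $\pm m/p$ (the $n\ge 2$ terms in $\exp$ are divisible by $x_1^2$, so they do not interfere), integrality of $MU_{2p}$ forces $p\mid m$ for every odd prime $p$, hence $a_1=0$, and the coefficient recursion then kills all higher $a_n$ in a torsion-free ring. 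This bypasses the $K$-theory co-operation step entirely and in particular yields a direct, purely algebraic proof of Theorem \ref{Theorem_D}, whereas the paper derives Theorem \ref{Theorem_D} as a corollary of the topological Lemma \ref{Lemma_C}. Both routes are valid; yours is arguably the more economical one.
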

Its (slightly more general) stable counterpart reads as follows.
\begin{lemx}\label{Lemma_C}%% This should be called main lemma!
  Each map of spectra $\Sigma_+^\infty K(\Z,2) \rightarrow MU$ that is a ring map up to homotopy induces a homomorphism between the Pontrjagin rings $MU_\ast(K(\Z,2))$ and $MU_\ast(MU)$ that vanishes on $\widetilde{MU}_\ast(K(\Z,2))$. 
\end{lemx}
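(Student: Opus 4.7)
The plan is to characterise ring maps $f \colon \Sigma^\infty_+ K(\Z,2) \to MU$ up to homotopy via the formal group law of $MU$ and then to show by a degree count that any such $f$ is homotopic to the trivial ring map. The conclusion of the lemma follows immediately.

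Any map of spectra $f$ corresponds to a power series
\begin{equation*}
u(x) = \sum_{k \geq 0} c_k x^k \in MU^0(K(\Z,2)) \cong MU^*[[x]], \qquad c_k \in MU_{2k},
\end{equation*}
where $x \in MU^2(\CP^\infty)$ is the $MU$-theoretic first Chern class of the tautological line bundle. Because the multiplication on $K(\Z,2) = \CP^\infty$ is tensor product of line bundles, the pullback along it is $\mu^*(x) = F(x \otimes 1, 1 \otimes x)$, where $F$ is the formal group law of $MU$. The requirement that $f$ respect units and multiplications up to homotopy therefore translates to the \emph{grouplike} equations
\begin{equation*}
u(0) = 1 \quad\text{and}\quad u(F(x,y)) = u(x)\, u(y) \ \text{in}\ MU^*[[x,y]],
\end{equation*}
equivalently, $u - 1$ is a homomorphism of formal groups $F \to \hat{\mathbb G}_m$ over $MU_*$.

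The heart of the proof is that the only such $u$ is $u = 1$. Rationally, the Mishchenko logarithm $\log_F(x) = \sum_{n \geq 0} [\CP^n]\, x^{n+1}/(n+1) \in (MU_* \otimes \Q)[[x]]$ linearises $F$; setting $t = \log_F(x)$ and $v(t) \deff u(\log_F^{-1}(t))$ converts the grouplike equation into $v(s+t) = v(s)\,v(t)$ with $v(0) = 1$, whose only solutions are exponentials $v(t) = \exp(\lambda t)$ for some $\lambda \in MU_* \otimes \Q$. A degree count finishes the job: since $v$ is homogeneous of degree $0$ and $|t| = 2$, we must have $|\lambda| = -2$; but $MU$ is connective, so $MU_{-2} = 0$ and hence $\lambda = 0$. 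Thus $v = 1$ and $u = 1$ rationally, and integrally because $MU_*$ is torsion-free.

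Consequently $f$ is homotopic to the composite $\Sigma^\infty_+ K(\Z,2) \xrightarrow{\epsilon} \SphereSpec \xrightarrow{\eta} MU$, where $\epsilon$ is the augmentation (collapse of $\CP^\infty$ to the basepoint) and $\eta$ is the unit of $MU$. In $MU$-homology, $\epsilon_*$ is the projection of $MU_*(K(\Z,2)) = MU_* \oplus \widetilde{MU}_*(K(\Z,2))$ onto the first summand, so $f_*$ vanishes on $\widetilde{MU}_*(K(\Z,2))$, as asserted. The main obstacle is the translation in the first step --- routine but requiring careful bookkeeping in the homotopy category --- between ring maps up to homotopy and grouplike elements of the cohomology ring; the rest is a clean degree count, which sharply exploits the connectivity of $MU$ and fails for periodic theories like complex $K$-theory, whose invertible Bott class of degree $-2$ yields nontrivial grouplike elements corresponding to the classical power maps $L \mapsto L^{\otimes n}$.
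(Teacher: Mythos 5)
Your translation of ring maps up to homotopy into grouplike power series $u(x)$ satisfying $u(F(x,y)) = u(x)u(y)$ and $u(0)=1$ is correct, and so is the rational reduction via the logarithm to $v(t) = \exp(\lambda t)$. But the degree count that is supposed to finish the argument contains a sign error, and this error is fatal. The exponent $\lambda$ has \emph{cohomological} degree $-2$, which means $\lambda \in MU^{-2}(\mathrm{pt}) = MU_2(\mathrm{pt}) = \pi_2 MU \cong \Z$, generated by $[\CP^1]$. This group is \emph{not} zero. You wrote instead that $\lambda \in MU_{-2} = 0$, which would be the group in \emph{homological} degree $-2$; but nothing forces $\lambda$ into that group. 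Connectivity of $MU$ does not help here. Incidentally, the remark at the end that periodic $K$-theory differs because its Bott class has degree $-2$ does not rescue the argument: that Bott class lives in $K^{-2} = K_2 \cong \Z$, exactly the same degree and rank as $MU^{-2} = MU_2 \cong \Z$, so the degree count does not distinguish $MU$ from $K$ at all.

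The upshot is that after the rational reduction one is left with the possibility that $u(x) = \exp(\lambda_2\,\log_F(x))$ for some integer multiple $\lambda_2$ of $[\CP^1]$, and showing that $\lambda_2 = 0$ is the genuine content of the lemma. The paper's proof confronts exactly this: after a divisibility argument shows that $f(\beta_1)$ lies in $MU_2(\mathrm{pt})\cdot 1$ (no $b_1$-term), it maps everything via the Conner--Floyd orientation to complex $K$-theory and exploits that $f$, being a map of spectra, must induce a \emph{comodule} homomorphism with respect to the $K_\ast K$-co-actions. Comparing $(\id\otimes g)\,\psi_{\CP^\infty}(t^2Y_2)$ with $\psi_{MU}(g(t^2Y_2))$ forces $\lambda_2 = 0$. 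Your argument uses only the multiplicativity of $f$ and the degree-zero condition on $u$; these alone are not enough, as they cannot see the comodule constraint that ultimately kills the putative exponential. To salvage your approach, one would need to impose this extra structure, which is precisely what the paper does.
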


Partial results strongly indicating Theorem \ref{Theorem_A} have been obtained by Joel Meier in his unpublished Master's Thesis \cite{joelmeier2021master}. 
He showed that there is no $H$-map $K(\Z,2) \rightarrow SO/U$ whose composition with the fibre comparison map $SO/U \rightarrow K(\Z,2) = \mathrm{hofib}(BSpin^c \rightarrow BSO)$ is homotopic to the identity.
Such a map, when it deloops, would produce a non-trivial map $T$ in Theorem \ref{Theorem_A}.
In this case, $S$ would arise from a map of spectra, a condition we do not assume. \\ $ $

Lemma \ref{Lemma_C} has a purely algebraic application.
A \emph{formal group law} over a ring $R$ is a formal power series $F \in R[[x,y]]$ that satisfies the same condition as the Taylor series expansion of the multiplication of a one dimensional Lie group.
Let $L$ be the Lazard ring, the ring over which the universal formal group law is defined.
Lemma \ref{Lemma_C} and Quillen's result \cite{Quillen_FormalGroupLaws_1969} that $L$ is isomorphic to $MU_\ast(\mathrm{pt})$ imply

\begin{thmx}\label{Theorem_D}
  There is no non-trivial homomorphism between the universal formal group law $F_{\mathrm{univ}}$ and the multiplicative formal group $F_{\mathrm{mult}}(x,y) \deff x + y + xy$ on $L$.
  More precisely, if $g \in L[[x]]$ is a formal power series with $g(0) = 0$ and
  \begin{equation*}
      g(F_{\mathrm{univ}}(x,y)) = F_{\mathrm{mult}}(g(x),g(y)),
  \end{equation*}
  then $g=0$.
\end{thmx}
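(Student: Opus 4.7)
My plan is to reduce Theorem \ref{Theorem_D} to Lemma \ref{Lemma_C} by converting an FGL homomorphism into a homotopy ring map of spectra, and then reading off its coefficients. Given $g \in L[[x]]$ with $g(0) = 0$ satisfying $g(F_{\mathrm{univ}}(x,y)) = F_{\mathrm{mult}}(g(x), g(y))$, I would first set $\phi(x) \deff 1 + g(x) \in L[[x]]$. Since $F_{\mathrm{mult}}(u,v) = u + v + uv$, the hypothesis on $g$ rewrites algebraically as
\begin{equation*}
  \phi(F_{\mathrm{univ}}(x,y)) \;=\; \phi(x) \cdot \phi(y),
\end{equation*}
so that $\phi$ is multiplicative with $\phi(0) = 1$.

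Using Quillen's isomorphism $L \cong MU_*(\mathrm{pt})$ together with $MU^*(\CP^\infty) = MU^*[[x]]$, the series $\phi$ classifies a map of spectra $h\colon \Sigma_+^\infty K(\Z,2) \to MU$ that is unital by $\phi(0) = 1$. The next step is to see that the multiplicativity of $\phi$ is precisely the condition for $h$ to be a ring map up to homotopy: the $H$-space structure on $K(\Z,2) = \CP^\infty$ coming from tensor product of line bundles induces the coproduct $\mu^*\colon MU^*(\CP^\infty) \to MU^*(\CP^\infty \times \CP^\infty) = MU^*[[x,y]]$ sending $x \mapsto F_{\mathrm{univ}}(x,y)$, so $\mu^*\phi = \phi(F_{\mathrm{univ}}(x,y))$ equals the external product $\phi(x) \cdot \phi(y)$ exactly when the displayed equation holds. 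Unwinding definitions, this is the compatibility of $h$ with the two ring structures up to homotopy.

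At this point Lemma \ref{Lemma_C} applies and yields that $h_*\colon MU_*(K(\Z,2)) \to MU_*(MU)$ vanishes on $\widetilde{MU}_*(K(\Z,2))$. Let $\{\beta_i\}_{i \geq 0}$ denote the standard $MU_*$-basis of $MU_*(\CP^\infty)$ dual to the powers $\{x^i\}$ under the Kronecker pairing, so that $\beta_i \in \widetilde{MU}_{2i}(\CP^\infty)$ for $i \geq 1$. To extract the coefficients of $\phi$ I would post-compose $h_*$ with the multiplication $\mu_{MU}\colon MU \wedge MU \to MU$: by construction of the Kronecker pairing, the resulting homomorphism $MU_*(\CP^\infty) \to MU_*$ sends $\beta_i$ to $\langle \phi, \beta_i \rangle$, which equals the coefficient $a_i$ of $x^i$ in $\phi$. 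Combined with $h_*(\beta_i) = 0$ for $i \geq 1$, this forces $a_i = 0$ for all $i \geq 1$, hence $g = \phi - 1 = 0$.

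The main obstacle I expect is just bookkeeping: verifying that the correspondence $\phi \leftrightarrow h$ really is a bijection between multiplicative series with $\phi(0) = 1$ and unital homotopy-ring maps $\Sigma_+^\infty K(\Z,2) \to MU$, and that post-composition with $\mu_{MU}$ extracts exactly the coefficient $a_i$ when paired with $\beta_i$. Both facts are standard, but they need to be tracked carefully through the grading conventions and through the distinction between strict ring maps and ring maps merely up to homotopy, which is the form actually supplied by Lemma \ref{Lemma_C}.
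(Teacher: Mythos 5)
Your proposal follows essentially the same route as the paper: pass from $g$ to $\phi = 1+g$, observe that multiplicativity of $\phi$ together with $\phi(0)=1$ is exactly the condition for the corresponding class in $[\Sigma_+^\infty \CP^\infty, MU] = MU^0(\CP^\infty)$ to be a homotopy ring map (using Quillen's identification $\mu^*(c) = F_{\mathrm{univ}}(c,d)$), and then invoke Lemma \ref{Lemma_C}. The only difference is cosmetic: where the paper concludes tersely that Lemma \ref{Lemma_C} forces $g+1 = 1$, you make the final coefficient extraction explicit by post-composing with $\mu_{MU}$ to realise the Kronecker pairing $\langle\phi,\beta_i\rangle = a_i$, which is the content already supplied in the paper by the Hurewicz-injectivity argument of Lemma \ref{StableReduction - Lemma}.
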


\textbf{Outline of the Proof:}
 Theorem \ref{Theorem_A} follows immediately from Theorem \ref{Theorem_B} once we know that $\zeta$ is sufficiently non-trivial.
 We will translate the statement of Theorem \ref{Theorem_B} into a stable homotopy theoretical one and use the Adams-Novikov spectral sequence to deduce it from Lemma \ref{Lemma_C}.
 The main task is the proof of Lemma \ref{Lemma_C}, which is purely algebraic.
 We will show that $MU_\ast(\mathrm{pt})$-algebra homomorphisms between the Pontrjagin rings $MU_\ast(K(\Z,2))$ and $MU_\ast (MU)$ are uniquely determined by the image of a single element in low degrees.
 Algebraic arguments then imply that any algebra homomorphism $MU_\ast(\CP^\infty) \rightarrow MU_\ast(MU)$ factors through the ground ring $MU_\ast(\mathrm{pt})$.
 Using the natural homology co-operations, we derive that no such algebra homomorphism can come from a map of spectra.

\textbf{Organisation of the Paper:}
We first give a short summary of the theory of orthogonal spectra and set up conventions in Section 2. 
Section 3 is devoted to the proof of Theorem \ref{Theorem_B} and Lemma \ref{Lemma_C}.
The proof of Theorem \ref{Theorem_A} will be carried out in Section 4.
Section 5, in which we prove Theorem \ref{Theorem_D} is logically independent of Section 4.

\textbf{Acknowledgements:} 
This question arose from the unpublished Master's Thesis of Joel Meier \cite{joelmeier2021master}. I would like to thank him for several discussions and reading an earlier draft of this article. I furthermore thank Thorben Kastenholz for an illuminating example and my two PhD advisors, Thomas Schick and Wolfgang Steimle, for their valuable advice.

\section{Foundations and Conventions}

Throughout the entire paper we work in the category of compactly generated Hausdorff spaces.
We assume that the reader is familiar with the notion of orthogonal spectra at the level of \cite{Roitz}.
The facts important to us are that orthogonal ring spectra form a symmetric monoidal model category \cite{Roitz}*{Theoremm 6.4.8} with the sphere spectrum $\mathbb{S}$ as unit.
The monoid objects are precisely (unital) ring spectra.
The category of ring spectra also carries a model structure such that forgetting the ring structure is a Quillen right adjoint \cite{Roitz}*{Theorem 6.6.12}.
Suspension and evaluation at the inital space form a strong symmetric monoidal adjunction
\begin{equation}\label{Susp Ev - Adjunction}
    \xymatrix{\Sigma_+^\infty \colon \mathbf{Top} \ar@<0.5ex>[rr] && \mathbf{Top}_\ast \ar@<0.5ex>[rr]^-{\Sigma^\infty} \ar@<0.5ex>[ll] && \SpecOrth \ar@<0.5ex>[ll]^-{\mathrm{Ev}_0} :\mathrm{Ev}_0,}
\end{equation}
see \cite{Roitz}*{Lemma 6.3.21} and \cite{Roitz}*{Lemma 6.3.18}. 
%It is also a Quillen adjunction \cite{Roitz}*{Lemma 5.2.20}.
Consequently, the suspension spectrum of a topological monoid is a ring spectrum and the subspace $R_0^\times$ of all elements of the inital space $R_0$ of a ring spectrum $R$ that induce invertible elements in $\pi_0(R)$ is a topological monoid.
This correspondence is also valid for morphisms.

Adjunction (\ref{Susp Ev - Adjunction}) maps the canonical cylinder objects $(\placeholder)\times [0,1]$ and $(\placeholder) \wedge [0,1]_+$ to each other so that $H$-spaces give rise to naive ring spectra\footnote{that are spectra equipped with a multiplication and a unit map that are associative and unital only up to homotopy} and $H$-maps give rise to naive ring homomorphisms.

If $R$ is a cofibrant and fibrant object in the model category of orthogonal ring spectra, then its \emph{its space of units} $GL_1(R)$ is the set of all path components of $R_0 = \Omega^\infty R$ that represent invertible elements in $\pi_0(R) = \pi_0(R_0)$.
It is a group like topological monoid.

For a topological monoid $G$, we denote with $|G_\bullet|$ the geometric realisation of the singular simplicial set $S_\bullet(G)$.
It is again a topological monoid and the co-unit $\varepsilon \colon |G_\bullet| \rightarrow G$ is a continuous monoid-homomorphism and a weak homotopy equivalence.

For a group like topolgical monoid, we define its classifying space $BG \deff |B_\bullet(e,|G_\bullet|,e)|$ to be the (thin) geometric realisation of the bar construction of $|G_\bullet|$ in sense of May, \cite{may1975classifying}*{§11}.
It comes with a quasi fibration $EG \rightarrow BG$, where $EG$ is a pointed, contractible, free $|G_\bullet|$-right space.
In this setup, Sugawara \cite{sugawara1957condition} constructed a \emph{group completion map} $\iota \colon BG \rightarrow \Omega BG$, which is an $H$-map \cite{sugawara1957condition}*{Lemma 11} and hence a weak homotopy equivalence (as $G$ is group like).
\section{Proof of Theorem \ref{Theorem_B}}

We first prove that Theorem \ref{Theorem_B} under the assumption of Lemma \ref{Lemma_C}.
To this end, we need good models for the involved players so that the space of units are strict monoids.

A model for $K(\Z,2) \simeq BU(1)$ is given by $\CP^\infty$. 
The $H$-structure induced by the group structures is modeled by polynomial multiplication 
\begin{align*}
    \mu \colon \bigl([\zeta_0:\zeta_1:\dots],[\omega_0:\omega_1:\dots]\bigr) \mapsto \bigl([\zeta_0\omega_0:\zeta_1\omega_0 + \zeta_0\omega_1:\zeta_2\omega_0 + \zeta_1\omega_1 + \zeta_0\omega_2:\dots]\bigr).
\end{align*}
It turns $\CP^\infty$ into a group-like monoid, with complex conjugation as homotopy inverse.

Pick an orthogonal ring spectrum that represents complex bordism. %%%For example: Schwedes construction for a symmetric-spectrum of MU and then apply the Prolongation of BR, which is a symmetric monoidal functor between symmetric and orthogonal spectra.
We denote its cofibrant-fibrant replacement in the model category of ring spectra with $MU$.

\begin{lemma}\label{StableReduction - Lemma}
 Adjunction (\ref{Susp Ev - Adjunction}) induces an injection 
 \begin{equation*}
     [\CP^\infty, GL_1 MU] \hookrightarrow [\Sigma_+^\infty \CP^\infty; MU]
 \end{equation*}
 and the image of the constant maps are precisely those homotopy classes that induce the zero map on $\widetilde{MU}_\ast(\CP^\infty)$.
\end{lemma}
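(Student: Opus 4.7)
The plan is to combine the adjunction $\Sigma_+^\infty \dashv \mathrm{Ev}_0$ of \eqref{Susp Ev - Adjunction}, the fact that $GL_1 MU$ is a union of path components of $\Omega^\infty MU$, and the stable splitting $\Sigma_+^\infty \CP^\infty \simeq \SphereSpec \vee \Sigma^\infty \CP^\infty$. The adjunction delivers a bijection $[\Sigma_+^\infty \CP^\infty, MU] \cong [\CP^\infty, \Omega^\infty MU]$. Since $GL_1 MU \hookrightarrow \Omega^\infty MU$ is the inclusion of a union of path components, any homotopy $H \colon \CP^\infty \times [0,1] \to \Omega^\infty MU$ whose endpoints land in $GL_1 MU$ already takes values in $GL_1 MU$: at each $x \in \CP^\infty$ the path $H(x,-)$ starts in $GL_1 MU$ and hence stays in the same path component. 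This means the adjunction bijection restricts to the claimed injection on the subset $[\CP^\infty, GL_1 MU]$.

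For the description of the image of constant maps, a constant map $c_u \colon \CP^\infty \to GL_1 MU$ at a unit $u \in (\pi_0 MU)^\times$ factors through the basepoint, so its adjoint $g$ factors as $\Sigma_+^\infty \CP^\infty \to \SphereSpec \xrightarrow{u} MU$. Consequently the induced map $g_\ast \colon MU_\ast(\CP^\infty) \to MU_\ast(MU)$ factors through $MU_\ast(\mathrm{pt}) = MU_\ast$ and therefore vanishes on $\widetilde{MU}_\ast(\CP^\infty)$. Conversely, given $g$ in the image of the injection, the stable splitting $\Sigma_+^\infty \CP^\infty \simeq \SphereSpec \vee \Sigma^\infty \CP^\infty$ decomposes $g$ as $g = g_0 \vee g_+$ with $g_0 \in \pi_0 MU$ a unit and $g_+ \in \widetilde{MU}^0(\CP^\infty)$, and it remains to show that vanishing of $g_\ast$ on $\widetilde{MU}_\ast(\CP^\infty)$ forces $g_+ \simeq 0$.

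This last step is where I expect the real content of the argument to lie. Post-composing $g_\ast$ with the multiplication $\mu_\ast \colon MU_\ast(MU) \to MU_\ast$ yields the slant product $\langle g, - \rangle \colon MU_\ast(\CP^\infty) \to MU_\ast$, which by assumption vanishes on $\widetilde{MU}_\ast(\CP^\infty)$. Using the classical presentations $MU^\ast(\CP^\infty) = MU_\ast[[x]]$ with $|x|=2$ and $MU_\ast(\CP^\infty) = MU_\ast\{\beta_0, \beta_1, \ldots\}$, where the $MU_\ast$-basis $\{\beta_n\}$ is dual to the powers of $x$, the statement reduces to an elementary coefficient computation: writing $g_+ = \sum_{n \geq 1} a_n x^n$ with $a_n \in MU_{2n}$, the pairing gives $\langle g_+, \beta_n \rangle = a_n$, so the hypothesis forces $a_n = 0$ for every $n \geq 1$. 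Hence $g_+ \simeq 0$ and $g$ is the adjoint of the constant map at the unit $g_0$, which identifies the image of the constant maps with exactly those homotopy classes whose induced map vanishes on $\widetilde{MU}_\ast(\CP^\infty)$.
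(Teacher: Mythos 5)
Your proof is correct, and the first two steps — the injection via path components plus the Quillen adjunction, and the observation that constant maps factor through $\mathbb{S}$ and hence kill $\widetilde{MU}_\ast(\CP^\infty)$ — coincide with what the paper does. Where you diverge is in the converse direction. The paper invokes the Adams--Novikov spectral sequence: since $MU_\ast(\CP^\infty)$ is a free $MU_\ast$-module and $MU_\ast MU$ is a free comodule, the ANSS collapses to its $0$-column, so the Hurewicz map $[\Sigma_+^\infty\CP^\infty, MU] \to \mathrm{Hom}_{MU_\ast MU}(MU_\ast\CP^\infty, MU_\ast MU)$ is a bijection; injectivity of this Hurewicz map then forces a class killing $\widetilde{MU}_\ast$ to agree with (the adjoint of) a constant. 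You instead use the stable splitting $\Sigma_+^\infty\CP^\infty \simeq \mathbb{S} \vee \Sigma^\infty\CP^\infty$ together with the Kronecker duality $MU^0(\CP^\infty) \cong \mathrm{Hom}_{MU_\ast}(MU_\ast\CP^\infty, MU_\ast)$ (a universal-coefficient statement, again valid because $MU_\ast(\CP^\infty)$ is $MU_\ast$-free), reading off the coefficients $a_n = \langle g_+, \beta_n\rangle$ directly. Both arguments hinge on the same freeness, but your route is lighter: it sidesteps the ANSS and the comodule machinery entirely. What the paper's approach buys is the finer fact that homotopy classes of spectrum maps $\Sigma_+^\infty\CP^\infty \to MU$ are detected by $MU_\ast MU$-comodule maps, which is a stronger structural conclusion, though not strictly needed for the lemma as stated. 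One small caveat: when you pass from ``$g_\ast$ vanishes on $\widetilde{MU}_\ast$'' to ``$\langle g,-\rangle$ vanishes on $\widetilde{MU}_\ast$'' you are using only the weaker hypothesis $\mu_\ast \circ g_\ast = 0$ on $\widetilde{MU}_\ast$, which is implied but not equivalent; this is harmless here since you only need the implication in that direction, but it is worth being explicit that you have not used the full strength of the vanishing of $g_\ast$.
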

\begin{proof}
 Since $GL_1(MU) \subseteq MU_0 = \Omega^\infty MU$ is a collection of path components, this inclusion induces an injective map on homotopy classes.
 As adjunction (\ref{Susp Ev - Adjunction}) is a Quillen adjunctin, we get an injective map
 \begin{equation*}
     \xymatrix{[\CP^\infty,GL_1(MU)] \ar@{^{(}->}[r] & [\CP^\infty,MU_0] \ar[r]^-{Ad}_-{\cong} & [\Sigma_+^\infty \CP^\infty;MU],}
 \end{equation*}
 %because $\Sigma_+^\infty\CP^\infty$ is a CW-spectrum, in particular cofibrant, and $MU$ is chosen to be fibrant.
 
 It follows from the Adams-Novikov spectral sequence that the Hurewicz homomorphism 
 \begin{align*}
  [\Sigma_+^\infty \CP^\infty; MU] \rightarrow \mathrm{Hom}_{MU_\ast MU}(  MU_\ast\CP^\infty;  MU_\ast MU), 
 \end{align*}
 whose target is the set of all $MU_\ast MU$-comodule maps, is bijective.
 Indeed, as described in \cite{Switzer}*{Chapter 19}, the edge homomorphism of the ANSS agrees with the Hurewicz homomorphism. 
 Since $MU_\ast MU$ is a free $MU_\ast MU$-comodule and $MU_\ast(\CP^\infty)$ is a free $MU_\ast(\mathrm{pt}$)-module \cite{Switzer}*{Prop 16.30}, the ANSS is concentrated in the column 
 \begin{equation*}
    E^2_{0,t} = \mathrm{Hom}_{MU_\ast MU}^t(  MU_\ast\CP^\infty;  MU_\ast MU),    
 \end{equation*}
 by \cite{Switzer}*{Prop 19.7}, which forces the spectral sequence to collapse for algebraic reasons.
 As the spectral sequence converges to $[\Sigma_+^\infty \CP^\infty; MU]_\ast$, the edge homomorphism must be an isomorphism. 
 
 The constant maps 
 \begin{equation*}
   \xymatrix{\CP^\infty \ar[rr]^-{\mathrm{const}} & & \mathrm{pt} \ar[rr]^-c & & MU_0 = \Omega^\infty MU}
 \end{equation*}  
 correspond under the adjunction to the following maps of spectra
 \begin{equation*}
  \xymatrix{\Sigma_+^\infty \CP^\infty \ar[rr]^{\Sigma_+^\infty\mathrm{const}} & & \Sigma_+^\infty \mathrm{pt} = \mathbb{S} \ar[rr]^{Ad(c)} & & MU,} 
 \end{equation*}  
 so they induce the zero map on $\widetilde{MU}_\ast(\CP^\infty) = MU_\ast(\CP^\infty,\mathrm{pt}) = \ker \const_\ast$. %\footnote{In fact, there are $\Z$ many path components in $\Omega^\infty MU$ and $Ad(c)$ is homotopic to a multiple of the unit map. The multiple is in one to one correspondence with the path component in which $c$ maps to. Since we only consider maps to $GL_1(MU)$ the multiple must be $\pm 1$. But we do not need this information. }.
 The bijection of the Hurewicz map conversely shows that every $MU_\ast(MU)$ comodule map that vanishes on $\widetilde{MU}_\ast(\CP^\infty) \subseteq MU_\ast(\CP^\infty)$ must come from a map of spectra that is homotopic to (the adjoint of) a constant map.
  This gives the result.
\end{proof}

\begin{proof}[Proof of Theorem \ref{Theorem_B}]
 Let $f \colon B\CP^\infty \rightarrow BGL_1(MU)$ be a twist and $\Omega f$ its induced map on loop spaces.
 The monoids $\CP^\infty$ and $GL_1(MU)$ are group-like, so the group completion maps are weakly homotopy equivalent $H$-maps.
 Since post-composition with weak equivalences induce bijections between homotopy classes, see \cite{Switzer}*{Thm. 6.31}, there are unique homotopy classes that make the following diagram commute up to homotopy
 \begin{equation*}
     \xymatrix{   \CP^\infty \ar@{-->}[rr]  && GL_1(MU)   \\ 
     |\CP^\infty_\bullet| \ar[u]^{\varepsilon}_\simeq \ar[d]_\iota^\simeq  \ar@{-->}[rr] && |GL_1(MU)_\bullet| \ar[u]^\simeq_\varepsilon \ar[d]^\iota_\simeq  \\
       \Omega B\CP^\infty \ar[rr]^{\Omega f} && \Omega BGL_1(MU). }
 \end{equation*}
 The maps of the group-competition zig-zags and $\Omega f$ are $H$-maps, so the homotopy class of the lift $\CP^\infty \dashrightarrow GL_1(MU)$ must consists of $H$-maps.
 We denote any representative of the lifted homotopy class again with $\Omega f$.
 
 Under the adjunction (\ref{Susp Ev - Adjunction}) the $H$-map $\Omega f$ corresponds to a ring homomorphism up to homotopy
 \begin{equation*}
     \varphi \deff Ad(\Omega f) \colon \Sigma_+^\infty \CP^\infty \rightarrow MU.
 \end{equation*}
 It follows from Lemma \ref{StableReduction - Lemma} that $\Omega f$ is null-homotopic if $\varphi$ is.
 The latter follows from the second half of Lemma \ref{StableReduction - Lemma} and Lemma \ref{Lemma_C}.
\end{proof}

  It remains to prove Lemma \ref{Lemma_C}, which is reformulated in Lemma \ref{Lemma_RingMapsAreZero} below.
  Before we carry out the proof, we recall and develop the required structural results.
  
  We recall from Switzer \cite{Switzer}*{Prop. 16.29} that 
  $MU^\ast(\CP^\infty) \cong MU^\ast[[c]]$,
  where $c \in \widetilde{MU}^2(\CP^\infty)$ is the universal first Chern-class.
  The homology is isomorphic to a free $MU_\ast(\mathrm{pt})$-module. 
  More precisely, we have $MU_\ast(\CP^\infty) \cong MU_\ast(\mathrm{pt})\{1,\beta_1,\beta_2,\dots\}$, where $\beta_j$ is dual to $c^j$,  see \cite{Switzer}*{Prop 16.30}.
  Since $MU_\ast(\mathrm{pt})$ is a polynomial ring over $\Z$, see \cite{Switzer}*{Theorem 20.25}, it is torsion free.
  We recall further that $(MU_\ast MU, MU_\ast(\mathrm{pt}))$ is a Hopf-algebroid.
  This means, in particular, that the multiplication of $MU$ turns $MU_\ast MU$ into a commutative ring, that the diagonal map $\Delta \colon MU \rightarrow MU \wedge MU$ induces a co-action on $MU_\ast MU$, and that these two structures are compatible. 
  However, $MU_\ast(\mathrm{pt})$ acts canonically from the left and right on $MU_\ast(MU)$ and these actions are not the same.
  There are therefore a left unit $\eta_L$ and a right unit $\eta_R$ on $MU_\ast MU$.
  The left unit sends $\lambda \in MU_\ast(\mathrm{pt})$ to $\lambda \cdot 1$, where $1$ is the unit of the Pontrjagin ring.
  For more details, see \cite{Switzer}*{p. 414 ff} or \cite{Kochman}*{Prop. 4.5.3}.
  We will usually consider $MU_\ast MU$ as a left-module over $MU_\ast(\mathrm{pt})$.
  As an $MU_\ast(\mathrm{pt})$-algebra, on which the ring $MU_\ast(\mathrm{pt})$ acts from the left, $MU_\ast (MU)$ is a polynomial ring:  
  \begin{equation*}
   MU_\ast(MU) \cong MU_\ast(\mathrm{pt})[b_1,b_2,\dots],
  \end{equation*}    
   where $b_j \in MU_{2j}(MU)$, see \cite{Switzer}*{Theorem 17.16} or \cite{Kochman}*{Prop. 4.4.4}.
  
  \begin{lemma}\label{Lemma_MUProduct}
   The additive generators $\{1, \beta_1, \beta_2, \dots\}$ of the Pontrjagin ring $MU_\ast(\CP^\infty)$ satisfy the multiplicative relations:
   \begin{equation*}
     \beta_i \bullet \beta_j \deff MU_\ast(\mu)(\beta_i \otimes \beta_j) = \sum_{k\geq 0} (\sum_{\substack{a_1 + \dots + a_k = i \\ b_1 + \dots + b_k = j}} \prod_{r=1}^k \alpha_{a_r b_r} ) \beta_k,
   \end{equation*}
   where the $\alpha_{ij}\in MU_{2(i+j-1)}$ are the coefficients in the unitary bordism formal group law.
  \end{lemma}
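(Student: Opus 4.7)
The plan is to express the Pontrjagin product on $MU_\ast(\CP^\infty)$ as the dual, under the Kronecker pairing, of the coproduct $\mu^\ast$ on $MU^\ast(\CP^\infty)$, and then to recognise this coproduct as being dictated by the complex bordism formal group law. Concretely, I would recall (e.g.\ from Switzer \cite{Switzer}) that the universal line bundle on $\CP^\infty$ is multiplicative under $\mu$, so that $\mu^\ast \colon MU^\ast[[c]] \to MU^\ast[[c\otimes 1, 1\otimes c]]$ is characterised on the generator by $\mu^\ast(c) = F(c\otimes 1, 1\otimes c) = \sum_{a,b\geq 0}\alpha_{ab}(c\otimes 1)^a(1\otimes c)^b$, with the conventions $\alpha_{00}=0$, $\alpha_{10}=\alpha_{01}=1$.

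Next I would use that $\mu^\ast$ is a ring homomorphism to expand
\begin{equation*}
  \mu^\ast(c^k) \;=\; F(c\otimes 1,\,1\otimes c)^k \;=\; \sum_{i,j\geq 0} \gamma_{ij}^k \,(c\otimes 1)^i (1\otimes c)^j,
\end{equation*}
where, after collecting monomials, the coefficient is exactly
\begin{equation*}
  \gamma_{ij}^k \;=\; \sum_{\substack{a_1+\dots+a_k=i\\ b_1+\dots+b_k=j}} \prod_{r=1}^k \alpha_{a_rb_r}.
\end{equation*}
Since $MU_\ast(\CP^\infty)$ is the free $MU_\ast(\mathrm{pt})$-module dual to $\{c^k\}_{k\geq 0}$ and the Künneth pairing is perfect in each fixed bidegree (only finitely many monomials contribute because all generators sit in positive degree), the Pontrjagin product $\beta_i\bullet\beta_j = \mu_\ast(\beta_i\otimes\beta_j)$ is characterised by
\begin{equation*}
  \langle c^k,\,\beta_i\bullet\beta_j\rangle \;=\; \langle \mu^\ast(c^k),\,\beta_i\otimes\beta_j\rangle \;=\; \gamma_{ij}^k,
\end{equation*}
so $\beta_i\bullet\beta_j = \sum_{k\geq 0}\gamma_{ij}^k\beta_k$, which is the asserted formula.

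The steps above are all standard once the formal group law identification of $\mu^\ast(c)$ is in place; the potential technical obstacle is bookkeeping around the completed tensor product appearing on the cohomological side versus the ordinary tensor product on the homological side. I would handle this by noting that the pairing with a fixed $\beta_i\otimes\beta_j$ extracts only the $(i,j)$-bidegree component of $\mu^\ast(c^k)$, which is a finite sum, so completion is irrelevant for the duality argument. A small sanity check, e.g.\ $\beta_1\bullet\beta_1 = 2\beta_2 + \alpha_{11}\beta_1$ recovering the classical formula, confirms the combinatorics and the sign conventions.
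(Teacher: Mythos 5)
Your proof is correct and follows essentially the same route as the paper: identify $\mu^\ast(c)$ with the formal group law, expand $\mu^\ast(c^k)$ using that $\mu^\ast$ is a ring map, and dualise via the Kronecker pairing and the Künneth identification of $\beta_i\otimes\beta_j$ with the dual of $c^id^j$. Your remark about the completed tensor product being harmless since the pairing only sees a fixed bidegree is a sensible extra precaution, though the paper leaves it implicit.
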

  \begin{proof}
   Recall that $MU^\ast(\CP^\infty) = MU^\ast(\mathrm{pt})[[c]]$ so that $MU^\ast(\CP^\infty \times \CP^\infty) = MU^\ast(\mathrm{pt})[[c,d]]$ are rings of formal power series over $MU^\ast(\mathrm{pt})$.
   The monoid structure $\mu$ induces under this identification a formal group law \cite{Kochman}. 
   More precisely
   \begin{equation*}
    MU^\ast(\mu)(c) = \sum_{i,j\geq 0} \alpha_{i,j} c^i d^j, 
   \end{equation*}    
   where $\alpha_{ij} \in MU^{-2(i+j-1)}(\mathrm{pt)} = MU_{2(i+j-1)}(\mathrm{pt})$.
   In particular, $\alpha_{10} = \alpha_{01} = 1$ and $\alpha_{i0}=\alpha_{0i} = 0$ if $i\neq 1$, see \cite{Kochman}*{p.148}.
   
   Since $MU^\ast(\mu)$ is a ring homomorphism, the images of the higher powers are given by
   \begin{equation*}
    MU^\ast(\mu)(c^k) = \left(MU^\ast(\mu)(c))^k\right) = \sum_{i,j\geq 0} \left(\sum_{\substack{i_1 + \dots + i_k = i \\ j_1 + \dots + j_k = j}} \prod_{r=1}^k \alpha_{i_r j_r} \right) c^i d^j.
   \end{equation*}
   Under the Künneth isomorphism $MU_\ast(\CP^\infty \times \CP^\infty) \cong MU_\ast(\CP^\infty) \otimes_{MU_\ast(\mathrm{pt})} MU_\ast(\CP^\infty)$, the dual element of $c^id^j$ is precisely $\beta_i \otimes \beta_j$, see \cite[Prop 4.3.2]{Kochman} for a reference.
   If $\langle \cdot , \cdot \rangle$ denotes the natural pairing between homology and cohomology, then we have
   \begin{align*}
    \langle MU_\ast(\mu)(\beta_i \otimes \beta_j), c^k\rangle &= \langle \beta_i \otimes \beta_j, MU^\ast(\mu)(c^k) \rangle   \\
    &= \langle \beta_i \otimes \beta_j , \sum_{a,b\geq 0} \sum_{\substack{a_1 + \dots + a_k = a \\ b_1 + \dots + b_k = b}} \prod_{r=1}^k \alpha_{a_r b_r}  c^a d^b \rangle \\
    &= \sum_{\substack{a_1 + \dots + a_k = i \\ b_1 + \dots + b_k = j}} \prod_{r=1}^k \alpha_{a_r b_r} .
   \end{align*}     
  Since $\beta_k$ is dual to $c^k$ we end up with the claimed formula.
  \end{proof}
  
  To make calculations manageable, we eventually carry them out in complex $K$-theory.
  Let us therefore recall the connection between complex bordism and complex $K$-theory following \cite[p.423 ff and 433 ff]{Switzer}.
  If $t \in K_2(\mathrm{pt}) = K^{-2}(\mathrm{pt})$ denotes the Bott-element, then Bott periodicity yields $K_\ast(\mathrm{pt}) \cong \Z [t,t^{-1}]$.
  The Pontrjagin ring $K_\ast(K)$ embeds into $K_\ast(K) \otimes \Q \cong \Q[u,v,u^{-1},v^{-1}]$.
  It is a Hopf-algebroid with left unit $\eta_L(t) = u$ and right unit $\eta_R(t) = v$.
  We do not need its co-action here.
 
 As in the case of complex bordism, $K_\ast(\CP^\infty)$ is a free left $K_\ast(\mathrm{pt})$-module and $K_\ast(MU)$ is isomorphic to a polynomial ring over $K_\ast(\mathrm{pt})$. 
 More precisely, we have an isomorphism of left $K_\ast(\mathrm{pt})$-modules
 \begin{equation*}
  K_\ast(\CP^\infty) \cong K_\ast(\mathrm{pt})\{1,t^1Y_1,t^2Y_2,\dots\}
 \end{equation*}
 with $Y_j \in K_0(\CP^\infty)$ and an isomorphism of $K_\ast(\mathrm{pt})$-algebras
 \begin{equation*}
  K_\ast(MU) \cong K_\ast(\mathrm{pt})[b_1^K,b_2^K,\dots],
 \end{equation*}
 where the underlying ring $K_\ast(\mathrm{pt})$ acts from the left.
 In \cite{Switzer} the elements $b_j^K$ are denoted by $Y_j'$.
 
 The Conner-Floyd orientation is a map of spectra $\CFOri \colon MU \rightarrow K$ that classifies the Thom class of the universal complex vector bundle, see \cite[p.434]{Switzer} or \cite[p.29]{ConnerFloyd_CobVsK_1966}.
  %It induces a natural transformation of multiplicative cohomologies $MU^\ast \rightarrow K^\ast$ and is therefore a map of naive ring spectra. 
  %The Conner-Floyd orientation factors through the canonical map $MU \rightarrow MSpin^c$ and the Atiyah-Singer orientation $MSpin^c \rightarrow K$.
 It induces a natural transformation of homology theories $MU_\ast \rightarrow K_\ast$ that maps $\beta_j$ to $t^jY_j$ and $b_j$ to $b_j^K$, respectively \cite[p.434]{Switzer}.
 It follows from the geometric description that $\CFOri$ maps the generator $[\CP^1] \in \Omega_2^U(\mathrm{pt})$ to the Bott element $t \in K_2(\mathrm{pt})$. 
 Alternatively, it follows from \cite[Cor 6.5]{ConnerFloyd_CobVsK_1966} and the fact that the Todd-genus of $\CP^1$ is $1$.
 
 Analogously to Lemma \ref{Lemma_MUProduct} we prove Lemma \ref{Lemma_KProduct}.
 \begin{lemma}\label{Lemma_KProduct}
  The additive generators $\{1,tY_1,t^2Y_2,\dots\}$ of the Pontrjagin ring $K_\ast(\CP^\infty)$ satisfy the multiplicative relations:
  \begin{align*}
   t^iY_i \bullet t^jY_j = t^{i+j} \cdot \sum_{k=\mathrm{max}\{i,j\}}^{i+j} \binom{k}{2k-(i+j)}\binom{2k-(i+j)}{k-j} Y_k.
  \end{align*}
 \end{lemma}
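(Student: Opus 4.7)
The plan is to mirror the proof of Lemma \ref{Lemma_MUProduct}, using the well-known fact that the formal group law for complex $K$-theory is multiplicative. The first step is to identify $K^\ast(\mu)(c) \in K^\ast(\CP^\infty \times \CP^\infty) \cong K^\ast(\mathrm{pt})[[c,d]]$. Normalising $c \in \widetilde{K}^2(\CP^\infty)$ so that $tc = L - 1$ for $L$ the tautological line bundle, the tensor-product identity $L_1 L_2 - 1 = (L_1 - 1) + (L_2 - 1) + (L_1 - 1)(L_2 - 1)$ yields
\begin{equation*}
    K^\ast(\mu)(c) = c + d + t \cdot cd.
\end{equation*}
Because this formal group law has only three non-zero terms, its $k$-th power admits a clean multinomial expansion
\begin{equation*}
    (c + d + tcd)^k = \sum_{i,j} \binom{k}{k-i,\, k-j,\, i+j-k}\, t^{i+j-k}\, c^i d^j,
\end{equation*}
where summands whose factorials become negative are interpreted as zero; this forces $\mathrm{max}\{i,j\} \leq k \leq i+j$.

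The second step applies the Kronecker-pairing argument from Lemma \ref{Lemma_MUProduct} verbatim. Under the Künneth isomorphism $Y_i \otimes Y_j$ is dual to $c^i d^j$, normalised so that $\langle t^p Y_p,\, c^q\rangle = \delta_{pq}$. Consequently
\begin{equation*}
    \langle K_\ast(\mu)(t^i Y_i \otimes t^j Y_j),\, c^k\rangle = \langle t^i Y_i \otimes t^j Y_j,\, K^\ast(\mu)(c^k)\rangle
\end{equation*}
picks out exactly $\binom{k}{k-i,\, k-j,\, i+j-k}\, t^{i+j-k}$. Expanding the left-hand side in the $K_\ast(\mathrm{pt})$-basis $\{1, tY_1, t^2 Y_2, \dots\}$ of $K_\ast(\CP^\infty)$ then identifies the coefficient of $t^k Y_k$ with that scalar, and the Bott factors combine as $t^{i+j-k}\cdot t^k = t^{i+j}$ to produce the uniform prefactor appearing in the statement.

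The final step is the elementary identity
\begin{equation*}
    \binom{k}{k-i,\, k-j,\, i+j-k} = \binom{k}{2k-(i+j)}\binom{2k-(i+j)}{k-j},
\end{equation*}
which rewrites the trinomial coefficient in the form required by the lemma. The only real subtlety is the careful bookkeeping of Bott-element powers in steps one and two; had $c$ been normalised with the opposite sign convention, a spurious $(-1)^{i+j-k}$ factor would arise that is not present in the stated formula. Beyond this convention issue the computation is purely combinatorial and closely parallels the $MU$ case.
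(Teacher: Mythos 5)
Your proposal is correct and follows essentially the same path as the paper's proof: identify the multiplicative formal group law $K^\ast(\mu)(y^K) = x + y + t\cdot xy$ by expanding the tensor-product relation for the tautological bundle, raise to the $k$-th power, and read off coefficients via the Kronecker pairing between the $\beta$-type basis of $K_\ast(\CP^\infty)$ and the monomial basis of $K^\ast(\CP^\infty \times \CP^\infty)$. The only cosmetic difference is in the combinatorics of the middle step: you invoke the multinomial theorem once and get the coefficient $\binom{k}{k-i,\,k-j,\,i+j-k}$ directly, then convert it to the product of binomials stated in the lemma, whereas the paper expands iteratively as $\sum_l \binom{k}{l}(x+y)^l(txy)^{k-l}$ and lands on the binomial form $\binom{k}{2k-(i+j)}\binom{2k-(i+j)}{k-j}$ without needing the trinomial-to-binomial identity. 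Your version is arguably a bit cleaner since the support condition $\max\{i,j\}\le k\le i+j$ falls out immediately from nonnegativity of the three multinomial entries; otherwise the two proofs are the same argument.
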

 \begin{proof}
  Let $\tau \rightarrow \CP^\infty$ the tautological line bundle.
  Since the monoid map $\mu \colon \CP^\infty \times \CP^\infty \rightarrow \CP^\infty$ classifies the tensor product of vector bundles, we have $\mu^\ast(\tau) = \pr_1^\ast \tau \otimes \pr_2^\ast \tau$.
  The (standard) complex orientation of complex $K$-theory is given by 
  \begin{equation*}
   y^K \deff t^{-1} \cdot (\tau - 1) \in \tilde{K}^2(\CP^\infty),
  \end{equation*}
  where $1=\varepsilon^1$ denotes the trivial complex line bundle, which is the unit in $K^\ast(\mathrm{pt})$.
  Since multiplication in $K$-theory is induced by the tensor products, we have under the isomorphism $K^\ast(\CP^\infty \times \CP^\infty) \cong K^\ast(\mathrm{pt})[[x,y]]$ the equality
  \begin{align*}
   K^\ast(\mu)(y^K) &= t^{-1}K^\ast(\mu)(\tau-1) = t^{-1}(\pr_1^\ast \tau  \otimes \pr_2^\ast \tau - 1 \otimes 1) \\
   &= t^{-1}(tx + 1)(ty + 1) - t^{-1} \\
   &= x + y + t\cdot xy. 
  \end{align*}
  The proof is now analogous to the proof of Lemma \ref{Lemma_MUProduct} but in contrast to the cited lemma we have to make the formula explicit.
  
  Since $K^\ast(\mu)$ is a ring homomorphism, the images of the higher powers are already determined
  \begin{equation*}
      K^\ast(\mu)((y^K)^k) = (x+y+t\cdot xy)^k = \sum_{l=0}^k \sum_{m=0}^l \binom{l}{m} \binom{k}{l} t^{k-l} x^{k-l+m}y^{k-m}.
  \end{equation*}
 
 As before, $t^iY_i \otimes t^jY_j$ is dual to $x^iy^j$ so that $\langle t^i Y_i \otimes t^jY_j; x^a y^b\rangle = \delta_{a,i}\delta_{b,j}$. 
 This implies
 \begin{align*}
     \langle t^iY_i \bullet t^jY_j ; y^k\rangle &= \langle K_\ast(\mu)(t^iY_i \otimes t^jY_j;y^k) = \langle t^iY_i \otimes t^jY_j , K^\ast(\mu)(y^k)\rangle \\
     &=\sum_{l=0}^k \sum_{m=0}^l \binom{l}{m} \binom{k}{l} t^{i+j-k} \langle t^{k-l+m}Y_i \otimes t^{k-m} Y_j;  x^{k-l+m}y^{k-m} \rangle \\
     &= \begin{cases} 
     \binom{k}{2k-(i+j)} \binom{2k - (i+j)}{k - j} t^{i+j-k} , &\text{ if } 0 \leq 2k - (i+j) \leq k, \\
     0, & \text{ else.}
     \end{cases}
 \end{align*}
 Clearly, the first condition is equivalent to $\lceil (i+j)/2\rceil \leq k \leq i+j$. 
 As $t^kY_k$ is dual to $y^k$, the previous calculations yield
 \begin{equation*}
     t^iY_i \bullet t^jY_j =  t^{i+j} \cdot \sum_{k=\lceil (i+j)/2\rceil}^{i+j} \binom{k}{2k-(i+j)} \binom{2k - (i+j)}{k - j} Y_k. 
 \end{equation*}
 The latter binomial coefficient is zero if $k - j > 2k -(i+j)$ or $k - i > 2k - (i+j)$.
 Thus the formula can be shortened to the claimed formula.
 \end{proof}
  \begin{lemma}\label{Lemma_RingMapsAreZero}
   Every map of (naive) ring spectra $\varphi \colon \Sigma_+^\infty \CP^\infty \rightarrow MU$ induces a homomorphism that vanishes on $\widetilde{MU}_\ast(\CP^\infty)$.
  \end{lemma}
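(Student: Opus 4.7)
The plan is to exploit that $\varphi$ is a ring map of spectra, so $\varphi_\ast$ is both an $MU_\ast(\mathrm{pt})$-algebra homomorphism of Pontrjagin rings and, by the Hurewicz/ANSS identification of Lemma~\ref{StableReduction - Lemma}, an $MU_\ast MU$-comodule map. My goal is $\varphi_\ast(\beta_j)=0$ for all $j\geq 1$. The strategy is two-phased: first, use the multiplicative relations of Lemma~\ref{Lemma_MUProduct} to reduce the datum of $\varphi_\ast$ to a single low-degree element; second, use the comodule structure to kill the remaining freedom.

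For the reduction phase, Lemma~\ref{Lemma_MUProduct} yields $\beta_1\bullet\beta_1 = \alpha_{11}\beta_1 + 2\beta_2$, so torsion-freeness of $MU_\ast MU = MU_\ast(\mathrm{pt})[b_1,b_2,\ldots]$ forces
\begin{equation*}
\varphi_\ast(\beta_2) \;=\; \tfrac{1}{2}\bigl(\varphi_\ast(\beta_1)^2 - \alpha_{11}\,\varphi_\ast(\beta_1)\bigr).
\end{equation*}
The expansion of $\beta_1\bullet\beta_j$ has the nonzero integer $j+1$ as coefficient of $\beta_{j+1}$, so iterating the same kind of argument determines each $\varphi_\ast(\beta_{j+1})$ inductively from $\varphi_\ast(\beta_1),\ldots,\varphi_\ast(\beta_j)$. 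Thus $\varphi_\ast$ is governed entirely by the single element $\varphi_\ast(\beta_1)\in MU_2(MU) = \Z[\CP^1]\oplus\Z b_1$. Because divisibility in the polynomial ring $MU_\ast MU$ is delicate to track directly, I would carry out the actual analysis after post-composing with the Conner-Floyd orientation $\CFOri$, where the binomial formula of Lemma~\ref{Lemma_KProduct} replaces the recursion by a cleaner $K$-theoretic counterpart. The expected outcome is that the binomial divisibility identities force the $b_1$-component of $\varphi_\ast(\beta_1)$ to vanish and, by induction, yield $\varphi_\ast(\beta_j)\in MU_\ast(\mathrm{pt})\cdot 1$ for every $j\geq 1$.

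It then remains to show that the scalar values $f_j\in MU_{2j}(\mathrm{pt})$ with $\varphi_\ast(\beta_j) = f_j\cdot 1$ must all vanish. Here the comodule condition enters: the coaction on $\beta_j\in MU_\ast(\CP^\infty)$ is governed by the universal formal group law (dually to the power-series description of $MU^\ast\mu$ in $MU^\ast(\CP^\infty)$), whereas the coaction on $f_j\cdot 1\in MU_\ast MU$ is controlled by the discrepancy between the two units $\eta_L$ and $\eta_R$. Comparing both sides of the coaction equality under $\mathrm{id}\otimes\varphi_\ast$ should give a system of relations forcing $f_j=0$ for all $j\geq 1$, which yields the lemma. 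The main obstacle is the $K$-theoretic divisibility analysis in the reduction phase: one has to propagate the binomial constraints of Lemma~\ref{Lemma_KProduct} coherently through all degrees and identify the inductive pattern that forces the higher (non-scalar) components of $\varphi_\ast(\beta_1)$, and hence of every $\varphi_\ast(\beta_j)$, to vanish.
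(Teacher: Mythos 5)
Your proposal follows the same two-phase strategy as the paper's proof: (i) exploit the algebra structure from Lemma~\ref{Lemma_MUProduct} together with torsion-freeness of the target polynomial ring to show, via a divisibility argument, that the $b_1$-component of $\varphi_\ast(\beta_1)$ must vanish, so $\varphi_\ast(\beta_1)$ is a scalar multiple of $1$; and (ii) use the $K_\ast(K)$-comodule coaction, where $\eta_L\neq\eta_R$ produces a $u$-versus-$v$ mismatch, to kill that remaining scalar. The only deviations are minor matters of execution: the paper runs the divisibility argument directly in $MU_\ast MU$ (from $\beta_1^{\bullet n}=n!\,\beta_n+\cdots$ it gets $n!\mid\lambda_0^n$ for all $n$, hence $\lambda_0=0$) rather than passing to $K$-theory first, and it needs only the single coaction identity on $t^2Y_2$ to force $\lambda_2=0$, after which the ring-map property propagates $\varphi_\ast(\beta_j)=0$ for all $j$ at once, so no "system of relations for all $f_j$" is required.
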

  \begin{proof}
   Since $\widetilde{MU}_\ast(\CP^\infty) = MU_\ast(\mathrm{pt})\{\beta_1,\beta_2,\dots\} \subseteq MU_\ast(\CP^\infty)$ \cite[Prop. 16.30]{Switzer} it suffices to show that $MU_\ast(\varphi)$ sends all $\beta_n$ to zero.
   We abbreviate $MU_\ast(\varphi)$ with $f$.
   Since $f$ is degree preserving, there are elements $\lambda_j \in MU_j(\mathrm{pt})$ such that 
   \begin{equation*}
    f(\beta_1) = \lambda_2 1 + \lambda_0 b_1 =: Q \in MU_2(MU).
   \end{equation*}
   From Lemma \ref{Lemma_MUProduct} we derive $\beta_1 \bullet \beta_1 = 2 \beta_2 + \alpha_{11}\beta_1$ and
   \begin{equation*}
       \beta_1^{\bullet n} = n! \beta_n + {}_n\zeta_{n-1} \beta_{n-1} \dots + {}_n\zeta_1 \beta_1
   \end{equation*}
   for some ${}_n\zeta_j \in MU_{2(n-j)}(\mathrm{pt})$.
   As $f$ is an algebra homomorphism, we deduce inductively
   \begin{equation*}
      \prod_{k=1}^n k! \cdot f(\beta_n) = \prod_{k=1}^{n-1} k! \cdot Q^n + {}_n\eta_{n-1}Q^{n-1} + \dots + {}_n\eta_{1}Q
   \end{equation*}
   for some ${}_n\eta_j \in MU_{2(n-j)}(\mathrm{pt})$.
   Furthermore, 
   \begin{equation*}
    \prod_{k=1}^n k! \cdot f(\beta_n) \equiv \prod_{k=1}^{n-1} k! \cdot Q^n \equiv \prod_{k=1}^{n-1} k! \cdot \lambda_0^n b_1^n \quad \text{ mod } \mathrm{span}\{1,b_1,\dots,b_1^{n-1}\},   
   \end{equation*}
   which implies that $n! \, | \,\lambda_0^n \in MU_0(\mathrm{pt}) \cong \Z$ for all $n \in \N$, so $\lambda_0$ must be zero.
   Thus, $Q = \lambda_2 \cdot 1 \in MU_2(\mathrm{pt})$.
   
   To show that $\lambda_2 = 0$, we consider the image of $Q$ in $K$-theory.
   The following diagram commutes
   \begin{equation*}
    \xymatrix{MU_\ast(\CP^\infty) \ar[rr]^-{MU_\ast(\varphi)} \ar[d]^{\CFOri} & & MU_\ast MU \ar[d]^{\CFOri} \\
    K_\ast(\CP^\infty) \ar[rr]^-{K_\ast(\varphi)} & & K_\ast(MU).}
   \end{equation*}
   Set $g \deff K_\ast(\varphi)$.
   Since $\CFOri$ is an isomorphism in degree $2$, it follows from the previous calculation that
   \begin{align*}
    g(tY_1) = g(\CFOri_2(\beta_1)) &= \CFOri_2(f(\beta_1)) = \CFOri_2(\lambda_2 1_{MU_\ast MU}) \\
    &=\CFOri_2(\lambda_2) 1_{K_\ast(MU)} = \lambda_2 t \cdot 1_{K_\ast MU},
   \end{align*}
   where we abuse notation and denote with $\lambda_2$ an element in $MU_2(\mathrm{pt})$ and the multiple $\lambda_2 \in \Z$ of the generator $[\CP^1] \in \Omega_2^U(\mathrm{pt})$ such that $\lambda_2 \cdot [\CP^1] = \lambda_2$.
   It follows from the Pontrjagin ring structure of $K_\ast(\CP^\infty)$ that
   \begin{align*}
    2g(t^2Y_2) &= g(tY_1 \bullet tY_1) - g(t^2Y_1) = g(tY_1)^2 - tg(tY_1) \\
     &= t^2(\lambda_2^2 - \lambda_2).  
   \end{align*}
  The formulas for the co-actions
 \begin{align*}
  \psi_{\CP^\infty} \colon K_\ast(\CP^\infty) &\rightarrow K_\ast(K) \otimes_{K_\ast(\mathrm{pt})}K_\ast(\CP^\infty), \quad \text{ and}\\
  \psi_{MU} \colon K_\ast(MU) &\rightarrow K_\ast(K) \otimes_{K_\ast(\mathrm{pt})}K_\ast(MU)
 \end{align*}  
 can be found in \cite[Prop 17.38]{Switzer}. 
 Since the co-actions are natural with respect to morphisms induced by maps of spectra, we must have 
 \begin{equation*}
  (\id \otimes g)(\psi_{\CP^\infty}(t^2Y_2)) = \psi_{MU}(g(t^2Y_2)).
\end{equation*}  
 The following calculations\footnote{as above, $\lambda_2 \in \Z$ denotes only the multiple of a generator}, in which $\cdot_r$ denotes the right action, 
 \begin{align*}
  (\id \otimes g)(\psi_{\CP^\infty}(t^2Y_2)) &= \sum_{i+j = 2} (P^j)_{2i} \otimes g(t^jY_j) \\
  &= 1 \otimes \frac{t^2}{2}(\lambda_2^2-\lambda_2)\cdot 1_{K_\ast MU} + p_1 \otimes \lambda_2 t \cdot 1_{K_\ast MU} + 0 \otimes 1 \\
  &= 1_{K_\ast K} \cdot_r \frac{t^2}{2}(\lambda_2^2-\lambda_2) \otimes 1_{K_\ast MU} +  \frac{(v-u)}{2} \cdot_r \lambda_2 t \otimes 1_{K_\ast MU} \\
  &= (\lambda_2^2-\lambda_2)\frac{v^2}{2} \otimes 1_{K_\ast MU} + \frac{\lambda_2 (v-u)v}{2} \otimes 1_{K_\ast MU} \\
  &= \frac{1}{2}\left(\lambda^2_2v^2 - \lambda_2 vu\right) \otimes 1_{K_\ast MU}
 \end{align*}
 and 
 \begin{align*}
  2\psi_{MU}(g(t^2Y_2)) &= \psi_{MU}(t^2(\lambda_2^2-\lambda_2)\cdot 1_{K_\ast MU}) \\
  &= t^2(\lambda^2_2 - \lambda_2) \cdot \psi_{MU}(1) \\
  &= t^2(\lambda^2_2-\lambda_2)\cdot 1_{K_\ast K} \otimes 1_{K_\ast MU} \\
  &= (\lambda_2^2 - \lambda_2) u^2 \otimes 1_{K_\ast MU}.
 \end{align*}
 show that these two elements are equal only if $\lambda_2 = 0$.
 Thus, $Q=0$ and so $MU_\ast(\varphi)$ vanishes on $\widetilde{MU}_\ast(\CP^\infty)$ because $MU_\ast(MU)$ has no torsion.
\end{proof}
\section{Proof of Theorem \ref{Theorem_A}}

Theorem \ref{Theorem_A} will follow immediately from Theorem \ref{Theorem_B} once we have shown that the twist map $\zeta \colon K(\Z,3) \rightarrow BGL_1(MSpin^c)$ in Theorem \ref{Theorem_A} induce non-trivial morphisms on homotopy groups. 
According to \cite{hebestreit2020twisted}*{p.52 ff} the twist map  $\zeta \colon K(\Z,3) \rightarrow BGL_1(Spin^c)$ loops to the map $BU(1) = K(\Z,2) \rightarrow GL_1(MSpin^c)$ that is the adjoint of 
\begin{equation*}
    M\iota \colon \Sigma_+^\infty BU(1) \rightarrow MSpin^c,
\end{equation*}
the map between Thom spectra induced by the fibre inclusion $\iota \colon U(1) \rightarrow Spin^c$.

Recall the canonical homomorphism $\xi \colon Spin^c \xrightarrow{2:1} SO \times U(1)$.
The representation that is used in the construction of the universal $Spin^c(n)$ vector bundle factors through $SO(n)$, i.e. we have by definition
\begin{equation*}
    MSpin_n^c = \mathrm{Th}(ESpin^c(n) \times_{\pr_1 \circ \xi} \R^n).
\end{equation*}
Thus $\xi \colon Spin^c(n) \rightarrow SO(n) \times U(1)$ induces maps
\begin{equation*}
    \xymatrix{\mathrm{Th}(ESpin^c(n) \times_{\pr_1 \circ \xi} \R^n) \ar[r] \ar@{=}[dd] & \mathrm{Th}(E(SO(n)\times U(1)) \times_{\pr_1} \R^n) \ar@{=}[d]  \\
    & \mathrm{Th}\bigl((ESO(n) \times_{\mathrm{taut}} \R^n) \times BU(1) \bigr)  \ar@{=}[d]\\
    MSpin^c(n) \ar[r]^{M_n\xi} & MSO(n) \wedge BU(1)_+ .}
\end{equation*}
These maps are compatible with the inclusion and therefore give rise to a map of spectra
\begin{equation*}
    MSpin^c \xrightarrow{M\xi} MSO \wedge BU(1)_+ = MSO \wedge \Sigma_+^\infty BU(1).
\end{equation*}
We will show  that the composition 
\begin{equation*}
   \xymatrix{ \Sigma_+^\infty BU(1) \ar[r]^{M\iota} & MSpin^c \ar[r]^-{M\xi} & MSO \wedge \Sigma_+^\infty BU(1)}
\end{equation*}
induces non-trivial morphisms on rational homology groups (for spectra). 
By the Thom isomorphism, we obtain the following commutative diagram
\begin{align*}
    \xymatrix{H_\ast(\Sigma_+^\infty BU(1);\Q) \ar[rr]^-{H_\ast(M\xi \circ M\iota)} \ar[dd]_\Phi^\cong && H_\ast(MSO \wedge BU(1)_+; \Q) \ar[d]^\Phi_{\cong} \\
    && H_\ast(BSO \times BU(1);\Q) \ar[d]^{H_\ast(\mathrm{pr}_2)} \\
    H_\ast(BU(1);\Q) \ar[rr]_-{H_\ast(\pr_2 \circ B\xi \circ B\iota)} && H_\ast(BU(1);\Q), }
\end{align*}
where $\Phi$ denotes the Thom isomorphism maps.
Note that $\mathrm{pr}_2 \circ B\xi \circ B\iota$ is homotopic to $B\varphi$, where $\varphi$ denotes the composition of group homomorphisms
\begin{equation*}
    \xymatrix{U(1) \ar[r]^\iota & Spin^c \ar[r]^-\xi & SO \times U(1) \ar[r]^-{\mathrm{pr}_2} & U(1). }
\end{equation*}
The homomorphism $\varphi$ satisfies $\varphi(z) = z^2$.

\begin{lemma}
 The map $B\varphi$ induces non-vanishing homomorphisms on all rational homology groups in even degree.
\end{lemma}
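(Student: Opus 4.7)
The plan is to use the fact that $B\varphi \colon BU(1) \rightarrow BU(1)$ is classified by the map sending the tautological line bundle $\tau$ to its tensor square $\tau^{\otimes 2}$, and then exploit the well-understood rational (co)homology of $\mathbb{C}P^\infty = BU(1)$.

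First, I would recall that $H^\ast(BU(1);\Q) \cong \Q[c]$ as a graded ring, where $c \in H^2(BU(1);\Q)$ is the first Chern class of the tautological bundle, and dually $H_\ast(BU(1);\Q) \cong \Q\{1,\beta_1,\beta_2,\dots\}$ with $\beta_n$ dual to $c^n$ under the natural pairing. In particular, the odd rational homology of $BU(1)$ vanishes, so only even degrees require attention.

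Next, I would compute $(B\varphi)^\ast(c)$. Since $\varphi(z) = z^2$, the classifying map $B\varphi$ pulls the tautological bundle back to $\tau^{\otimes 2}$, whence
\begin{equation*}
    (B\varphi)^\ast(c) = c_1(\tau^{\otimes 2}) = 2c_1(\tau) = 2c.
\end{equation*}
Because $(B\varphi)^\ast$ is a ring homomorphism, this yields $(B\varphi)^\ast(c^n) = 2^n c^n$ for every $n\geq 0$.

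Dualising via the natural pairing between homology and cohomology gives
\begin{equation*}
    \langle (B\varphi)_\ast(\beta_n), c^n\rangle = \langle \beta_n, (B\varphi)^\ast(c^n)\rangle = 2^n \langle \beta_n, c^n\rangle = 2^n,
\end{equation*}
while the pairing of $(B\varphi)_\ast(\beta_n)$ with any other monomial $c^k$, $k \neq n$, vanishes by degree reasons. Hence $(B\varphi)_\ast(\beta_n) = 2^n \beta_n$, which is non-zero in the rational vector space $H_{2n}(BU(1);\Q)$ for every $n \geq 1$ (and $(B\varphi)_\ast(1) = 1$). This proves the lemma; in fact $(B\varphi)_\ast$ is an isomorphism in every even degree over $\Q$.

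There is no serious obstacle here: the only point that needs verification is the statement $(B\varphi)^\ast(c) = 2c$, which is immediate from the tensor product formula for the first Chern class together with the observation that $B\varphi$ classifies $\tau^{\otimes 2}$.
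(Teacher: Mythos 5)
Your proof is correct, but it takes a different (and arguably cleaner) route than the paper. The paper stays in homology throughout, using the Pontrjagin ring structure of $H_\ast(BU(1);\Q)$: it first establishes the product rule $\gamma_i \bullet \gamma_j = \binom{i+j}{i}\gamma_{i+j}$ (by a calculation analogous to Lemma \ref{Lemma_KProduct}), then uses the fact that $B\varphi$ is an $H$-map inducing multiplication by $2$ on $H_2$ to deduce inductively that $H_{2n}(B\varphi;\Q)(\gamma_n) = 2^n\gamma_n$. You instead compute in cohomology, observing that $(B\varphi)^\ast(c) = 2c$ because $B\varphi$ classifies $\tau^{\otimes 2}$, then use the ring structure of $H^\ast(BU(1);\Q) \cong \Q[c]$ to get $(B\varphi)^\ast(c^n) = 2^n c^n$, and finally dualize. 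Your approach is more elementary in that it only needs the cohomology ring and the duality pairing, bypassing the Pontrjagin product entirely; the paper's approach has the advantage of staying methodologically consistent with the rest of the article (which works almost exclusively with Pontrjagin rings of $\CP^\infty$ in $MU$ and $K$-theory), and of exercising the $H$-map property of $B\varphi$ that it already established.
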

\begin{proof}
Note that $B\varphi$ is an $H$-map that induces the multiplication with $2$ on $H_2(BU(1);\Q)$.
A similar (but easier) computation as in the proof of Lemma \ref{Lemma_KProduct} shows that the Pontrjagin ring structure of $H_\ast(BU(1);\Q) \cong \Q\{1,\gamma_1,\gamma_2,\dots\}$ is given by
\begin{equation*}
    \gamma_i \bullet \gamma_j = \binom{i+j}{i} \gamma_{i+j}.
\end{equation*}
Using this formula, we derive inductively
\begin{equation*}
    H_{2n}(B\varphi;\Q)(\gamma_n) = 2^n \gamma_n,
\end{equation*}
which is obviously non-zero.
\end{proof}
\begin{cor}
 The map $M\iota \colon \Sigma_+^\infty BU(1) \rightarrow MSpin^c$ induce non-trivial homomorphisms on homotopy groups in even degree.
\end{cor}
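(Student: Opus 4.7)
The plan is to chase through the commutative diagram displayed just above the preceding lemma and then translate the resulting statement from rational homology to homotopy. The lemma guarantees that $H_{2n}(B\varphi;\Q)$ is non-zero for every $n \geq 0$. In the vertical legs of that square, both Thom isomorphisms $\Phi$ are bijective in every degree, while $H_\ast(\pr_2)$ admits an obvious splitting coming from the basepoint inclusion $BU(1) \hookrightarrow BSO \times BU(1)$. Consequently the top horizontal arrow $H_\ast(M\xi \circ M\iota;\Q)$ is non-zero in every even degree, and therefore so is $H_\ast(M\iota;\Q)$.

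To pass from homology to homotopy I would invoke the standard fact that for a connective spectrum $X$ rationalisation is modelled by $X \wedge H\Q$, yielding a natural isomorphism $\pi_\ast(X) \otimes \Q \cong H_\ast(X;\Q)$. Both $\Sigma_+^\infty BU(1)$ and $MSpin^c$ are connective, so the induced map $\pi_{2n}(M\iota) \otimes \Q \to \pi_{2n}(MSpin^c) \otimes \Q$ is non-zero; in particular $\pi_{2n}(M\iota) \neq 0$ for every $n$.

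The only mild subtlety is that the preceding lemma is phrased at the space level while the diagram uses spectrum homology. The identification $H_\ast(\Sigma_+^\infty X;\Q) \cong H_\ast(X;\Q)$, together with the naturality of the Thom isomorphism, handles this bookkeeping, and no further computation is required; the corollary is then a direct consequence of the lemma.
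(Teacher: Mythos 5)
Your argument is correct and is precisely the diagram chase the paper intends (the paper leaves the corollary's proof implicit after setting up the commutative square and the lemma). Two minor remarks: the splitting of $H_\ast(\pr_2)$ you invoke is not actually needed — commutativity of the square already forces $H_\ast(M\xi \circ M\iota;\Q)$, hence $H_\ast(M\iota;\Q)$, to be non-zero, since the composite $H_\ast(B\varphi)\circ\Phi$ down the left and across the bottom is non-zero in even degrees by the lemma; and the identification $\pi_\ast(X)\otimes\Q \cong H_\ast(X;\Q)$ via smashing with $H\Q \simeq \SphereSpec_\Q$ holds naturally for arbitrary spectra, so the connectivity hypothesis, while true here, is not required.
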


This Corollary together with Theorem \ref{Theorem_B} immediately implies the main theorem.

\begin{proof}[Proof of Theorem \ref{Theorem_A}]
 Assume that we would have a factorisation up to homotopy
 \begin{equation*}
      \xymatrix{K(\Z,3) \ar[r]^-{\zeta} \ar[rd]_T & BGL_1(MSpin^c) \\
      & BGL_1(MU) \ar[u]_{S} }
 \end{equation*}
 then their loopings were homotopic, too. 
 The adjoint maps then would satisfy
 \begin{equation*}
     M\iota = Ad(\Omega \zeta) \simeq Ad(\Omega S \circ \Omega T).
 \end{equation*}
 But we know from Theorem \ref{Theorem_B} that $\Omega T$, and hence $M\iota = Ad(\Omega S \circ \Omega T)$ too, is null-homotopic. 
 Thus, $\iota$ would be trivial on all homotopy groups, a contradiction.
\end{proof}
\section{Proof of Theorem \ref{Theorem_D}}

Let $R$ be a commutative ring with unit. 
Recall that a (one-dimensional, commutative) \emph{formal group law} over $R$ is a  formal power series $F \in R[[x,y]]$ that satisfies 
\begin{align*}
    F(x,0) = F(0,x) = x, \  F(x,y) = F(y,x), \text{ and } F(x,F(y,z)) = F(F(x,y),z).
\end{align*}
A \emph{homomomorphism} between formal group laws $F$, and $G$ is a formal power series $g \in R[[x]]$ with $g(0) = 0$ and $g(F(x,y)) = G(g(x),g(y))$.

By the work of Lazard, there is a ring $L$ that carries a universal group law $F_{\mathrm{univ}} \in L[[x,y]]$ on it in the sense that, for any other formal group law $F$ over a ring $R$, there is a unique ring homomorphism $\phi \colon L \rightarrow R$ such that $\phi(F_{\mathrm{univ}}(x,y)) = F(x,y)$.
Quillen \cite{Quillen_FormalGroupLaws_1969} identified $L$ as the unitary bordism ring $MU^\ast(\mathrm{pt})$.
Under this identification, the universal group law agrees with the unitary bordism formal group law, that means it is given by the formal power series
\begin{align*}
    MU^\ast(\mu)(c) = \sum_{i,j \geq 0} \alpha_{ij} c^id^j \in MU^\ast[[c,d]] 
\end{align*}
where we use the notation is the same as in Lemma \ref{Lemma_MUProduct}.

This translation into the topological realm allows us to deduce Theorem \ref{Theorem_D} from Lemma \ref{Lemma_C} by showing that these two statements are, in fact, equivalent.

\begin{proof}[Proof of Theorem \ref{Theorem_D}]
 As $MU^\ast(\mathrm{pt})[[c]]_0 = MU^0(\CP^\infty) = [\Sigma_+^\infty \CP^\infty; MU]$, we can represent any formal power series of pure degree zero 
 \begin{align*}
     a = a(c) = \sum_{k \geq 0} a_k c^k, \quad\text{ with } a_k \in MU^{-2k}(\mathrm{pt}),
 \end{align*}
 by a map of spectra $f_a \colon \Sigma_+^\infty \CP^\infty \rightarrow MU$ that is unique up to homotopy.
 The map $f_a$ is, by definition, a naive ring homomorphism if and only if the following diagram commutes up to homotopy
 \begin{equation*}
     \xymatrix{  \Sigma_+^\infty \CP^\infty \wedge \Sigma_+^\infty \CP^\infty \ar[r]^-{f_a \wedge f_a} \ar[d]^{\Sigma_+^\infty \mu} & MU \wedge MU \ar[d]^{m} & \mathbb{S} \ar[d]_{\eta_{\Sigma_+^\infty \CP^\infty}} \ar[dr]^{\eta_{MU}}& \\  \Sigma_+^\infty \CP^\infty \ar[r]^-{f_a} & MU & \Sigma_+^\infty \CP^\infty \ar[r]^{f_a} & MU, } 
 \end{equation*}
 where $m$ denotes the ring structure of $MU$ and $\eta_{(\placeholder)}$ denotes the corresponding unit.
 Translating these diagrams into algebra, this means that
 \begin{equation*}
     \mu^\ast(a) = a(c)\cdot a(d) \qquad \text{ and } \qquad a(0) = 1,
 \end{equation*}
 because $m$ represents the cohomology cross product, see \cite{Switzer}*{p. 270}.
 Quillen's theorem \cite{Quillen_FormalGroupLaws_1969} yields
 \begin{equation*}
     \mu^\ast(a) = \sum_{k\geq 0}a_k \mu^\ast(c)^k = \sum_{k\geq 0} a_k \left(F_{\mathrm{univ}}(c,d)\right)^k = a(F_{\mathrm{univ}}(c,d)).
 \end{equation*}
 Thus, we have $a(F_{\mathrm{univ}}(c,d)) = a(c)\cdot a(d)$ and if we set $g = a-1$, then we get 
 \begin{equation*}
     g(F_{\mathrm{univ}}(c,d)) = g(c) + g(d) + g(c)g(d) = F_{\mathrm{mult}}(g(c),g(d)) \quad \text{ and } \quad g(0)=0.
 \end{equation*}
 In conclusion, an element $g \in MU^\ast(\mathrm{pt})[[c]]$ is a homomorphism between the universal formal group law and the multiplicative formal group law if and only if $g+1$ is represented by a naive ring homomorphism.
 But Lemma \ref{Lemma_C} implies that $g + 1 = 1$ so that $g=0$.
\end{proof}

%%%%% Literatur

\bibliographystyle{alpha}
\bibliography{Literatur}

\vspace{1cm}

\small{\textit{E-Mail:} \texttt{thorsten.hertl@stud.uni-goettingen.de}} \\

\small{\scshape{Institut für Mathematik, Universität Göttingen, Bunsenstraße 3-5, D-37037 Göttingen}}

\end{document}